\begin{document}


\newcommand{\dynkin}[5]{
\xymatrix@C=10pt{
{}\POS[]*++!D{\scriptstyle #1}*{\circ} \ar@{-}[r] &
{}\POS[]*++!D{\scriptstyle #2}*{\times} \ar@{-}[r] &
{}\POS[]*++!D{\scriptstyle 0}*{\circ} \ar@{}[rr] \ar@{-}[r] &
*{\scriptscriptstyle\ \cdots\ } \ar@{-}[r] &
{}\POS[]*++!D{\scriptstyle #3}*{\circ}  \ar@{-}[r] &
{}\POS[]*++!D{\scriptstyle #4}*{\circ}  \ar@{-}[r] &
{}\POS[]*++!D{\scriptstyle #5}*{\circ}   \ar@{}[rr] \ar@{-}[r] &
*{\scriptscriptstyle\ \cdots\ } \ar@{-}[r] &
{}\POS[]*++!D{\scriptstyle 0}*{\circ}}
}



\newcommand{\smb}          
{\mbox{$
\begin{picture}(2,2)(0,0)
\put(0,2.5){\line(1,0){2.5}}
\put(0,0){\line(1,0){2.5}}
\put(0,0){\line(0,1){2.5}}
\put(2.5,0){\line(0,1){2.5}}
\end{picture}$}}

\newcommand{\syiiII}{\mbox{%
$\begin{picture}(7,6)(-1,0)
\put(0,2.5){\smb}
\put(2.5,2.5){\smb}
\put(0,0){\smb}
\put(2.5,0){\smb}
\end{picture}$}}


\newcommand{\mb}                         
{\mbox{$
\begin{picture}(4,4)(0,0)
\put(0,5){\line(1,0){5}}
\put(0,0){\line(1,0){5}}
\put(0,0){\line(0,1){5}}
\put(5,0){\line(0,1){5}}
\end{picture}$}}

\newcommand{\yiII}{\mbox{$
\begin{picture}(7,10)(-1,1)
\put(0,5){\mb}
\put(0,0){\mb}
\end{picture}
$}}

\newcommand{\yiIIi}{\mbox{$
\begin{picture}(12,10)(-1,1)
\put(0,10){\line(1,0){10}}
\put(0,5){\line(1,0){10}}
\put(0,0){\line(1,0){5}}
\put(0,0){\line(0,1){10}}
\put(5,0){\line(0,1){10}}
\put(10,5){\line(0,1){5}}
\end{picture}
$}}

\newcommand{\yiiII}{\mbox{$
\begin{picture}(13,10)(-1,1)
\put(0,10){\line(1,0){10}}
\put(0,5){\line(1,0){10}}
\put(0,0){\line(1,0){10}}
\put(0,0){\line(0,1){10}}
\put(10,0){\line(0,1){10}}
\put(5,0){\line(0,1){10}}
\end{picture}
$}}

\newcommand{\yiIIIi}{\mbox{$
\begin{picture}(13,15)(-1,1)
\put(0,15){\line(1,0){10}}
\put(0,10){\line(1,0){10}}
\put(0,5){\line(1,0){5}}
\put(0,0){\line(1,0){5}}
\put(0,0){\line(0,1){15}}
\put(10,10){\line(0,1){5}}
\put(5,0){\line(0,1){15}}
\end{picture}
$}}


\newcommand{\yii}{ \mbox{$
\mbox{$\begin{picture}(13,17)(-1,-1)
\put(0,15){\line(1,0){10}}
\put(0,10){\line(1,0){10}}
\put(0,0){\line(1,0){10}}
\put(0,-5){\line(1,0){10}}
%
\put(0,-5){\line(0,1){20}}
\put(10,-5){\line(0,1){20}}
\put(5,-5){\line(0,1){7}}
\put(5,8){\line(0,1){7}}
\put(1,2){.}
\put(1,4){.}
\put(1,6){.}
\put(6,2){.}
\put(6,4){.}
\put(6,6){.}
\end{picture}$}
$}}

\newcommand{\yiik}[1]{ \mbox{$\mbox{$\begin{picture}(11,13)(-1,-1)
\put(1,3){\mbox{\tiny ${#1}$}}
\put(4,9){\vector(0,1){6}}
 \put(4,1){\vector(0,-1){6}}
\end{picture}$}
\mbox{$\begin{picture}(13,13)(-1,-1)
\put(0,15){\line(1,0){10}}
\put(0,10){\line(1,0){10}}
\put(0,0){\line(1,0){10}}
\put(0,-5){\line(1,0){10}}
%
\put(0,-5){\line(0,1){20}}
\put(10,-5){\line(0,1){20}}
\put(5,-5){\line(0,1){7}}
\put(5,8){\line(0,1){7}}
\put(1,2){.}
\put(1,4){.}
\put(1,6){.}
\put(6,2){.}
\put(6,4){.}
\put(6,6){.}
\end{picture}$}
$}}

\newcommand{\yiikwide}[1]{ \mbox{$\mbox{$\begin{picture}(21,13)(-1,-1)
\put(1,3){\mbox{\tiny ${#1}$}}
\put(10,9){\vector(0,1){6}}
 \put(10,1){\vector(0,-1){6}}
\end{picture}$}
\mbox{$\begin{picture}(13,13)(-1,-1)
\put(0,15){\line(1,0){10}}
\put(0,10){\line(1,0){10}}
\put(0,0){\line(1,0){10}}
\put(0,-5){\line(1,0){10}}
%
\put(0,-5){\line(0,1){20}}
\put(10,-5){\line(0,1){20}}
\put(5,-5){\line(0,1){7}}
\put(5,8){\line(0,1){7}}
\put(1,2){.}
\put(1,4){.}
\put(1,6){.}
\put(6,2){.}
\put(6,4){.}
\put(6,6){.}
\end{picture}$}
$}}

\newcommand{\yiia}{ \mbox{$
\mbox{$\begin{picture}(13,13)(-1,-3.5)
\put(0,15){\line(1,0){10}}
\put(0,10){\line(1,0){10}}
\put(0,0){\line(1,0){10}}
\put(0,-5){\line(1,0){10}}
\put(0,-10){\line(1,0){5}}
%
\put(0,-10){\line(0,1){25}}
\put(10,-5){\line(0,1){20}}
\put(5,-10){\line(0,1){12}}
\put(5,8){\line(0,1){7}}
\put(1,2){.}
\put(1,4){.}
\put(1,6){.}
\put(6,2){.}
\put(6,4){.}
\put(6,6){.}
\end{picture}$}
$}}

\newcommand{\yiiaa}{ \mbox{$
\mbox{$\begin{picture}(13,19)(-1,-6)
\put(0,15){\line(1,0){10}}
\put(0,10){\line(1,0){10}}
\put(0,0){\line(1,0){10}}
\put(0,-5){\line(1,0){10}}
\put(0,-10){\line(1,0){5}}
\put(0,-15){\line(1,0){5}}
%
\put(0,-15){\line(0,1){30}}
\put(10,-5){\line(0,1){20}}
\put(5,-15){\line(0,1){17}}
\put(5,8){\line(0,1){7}}
\put(1,2){.}
\put(1,4){.}
\put(1,6){.}
\put(6,2){.}
\put(6,4){.}
\put(6,6){.}
\end{picture}$}
$}}

\newcommand{\yiiaka}[1]{ \mbox{$\mbox{$\begin{picture}(20,21)(-1,-7)
\put(0,0){\tiny ${#1}$}
\put(9,10){\vector(0,1){6}}
 \put(9,-8){\vector(0,-1){6}}
\end{picture}$}
\mbox{$\begin{picture}(13,21)(-1,-7)
\put(0,15){\line(1,0){10}}
\put(0,10){\line(1,0){10}}
\put(5,0){\line(1,0){5}}
\put(5,-5){\line(1,0){5}}
%
\put(0,-15){\line(0,1){30}}
\put(10,-5){\line(0,1){20}}
\put(5,-15){\line(0,1){17}}
\put(5,8){\line(0,1){7}}
\put(0,-15){\line(1,0){5}}
\put(0,-10){\line(1,0){5}}
\put(1,-4){.}
\put(1,-6){.}
\put(1,-8){.}
\put(1,2){.}
\put(1,4){.}
\put(1,6){.}
\put(6,2){.}
\put(6,4){.}
\put(6,6){.}
\end{picture}$}
$}}


\renewcommand{\arraystretch}{1,5}

\newcommand{\ce}{{\mathcal E}}
\newcommand{\ph}{\phantom{'}}

\newcommand{\Do}{\mbox{\sf D}}
\newcommand{\M}{\mbox{\sf M}}
\newcommand{\Pe}{{\sf P}}

\newcommand{\ep}{{\epsilon}}
\newcommand{\dt}{{\delta}}
\newcommand{\al}{{\alpha}}
\newcommand{\bt}{{\beta}}
\newcommand{\fii}{{\varphi}}
\newcommand{\om}{\omega}
\newcommand{\Om}{\Omega}
\newcommand{\sig}{\sigma}

\newcommand{\asl}{\mathfrak{sl}}
\newcommand{\agl}{\mathfrak{gl}}
\newcommand{\g}{{\mathfrak g}}
\newcommand{\G}{{\mathcal G}}

\newcommand{\C}{{\mathbb C}}
\newcommand{\R}{{\mathbb R}}

\newcommand{\Ca}{{\mathcal{C}}}
\newcommand{\id}{{\textup{id}}}
\newcommand{\Alt}{{\operatorname{Alt}}}
\newcommand{\Ad}{{\operatorname{Ad}}}

\newcommand{\bal}{\boldsymbol{\al}}
\newcommand{\bbt}{\boldsymbol{\bt}}
\newcommand{\dbal}{{\bf \dot{\bal}}}
\newcommand{\ddbal}{{\bf \ddot{\bal}}}
\newcommand{\dbbt}{{\bf \dot{\bbt}}}
\newcommand{\ddbbt}{{\bf \ddot{\bbt}}}

\newcommand{\bA}{{\bf A}}
\newcommand{\bB}{{\bf B}}
\newcommand{\dbA}{{\bf \dot{A}}}
\newcommand{\ddbA}{{\bf \ddot{A}}}
\newcommand{\dbB}{{\bf \dot{B}}}
\newcommand{\ddbB}{{\bf \ddot{B}}}
\newcommand{\dddbB}{{\bf \dddot{B}}}

\newcommand{\XX}{{\mathbb X}}
\newcommand{\WW}{{\mathbb W}}
\newcommand{\YY}{{\mathbb Y}}

\newcommand{\lpl}
{\mbox{$
\begin{picture}(12.7,8)(-.5,-1)
\put(2,0.1){$+$}
\put(6.2,2.5){\oval(8,8)[l]}
\end{picture}$}}

\newcommand{\tpl}
{\mbox{$
\begin{picture}(12.7,8)(-.5,-1)
\put(2.2,0.2){$+$}
\put(6,2.8){\oval(8,8)[t]}
\end{picture}$}}

\newcommand{\modker}{{  \quad \operatorname{ mod } \operatorname{ Ker} (\Pi_{\syiiII})}}


\newcounter{counter}
\numberwithin{counter}{section}

 \newtheorem{lem}[counter]{Lemma}
 \newtheorem{thm}[counter]{Theorem}
 \newtheorem{cor}[counter]{Corollary} 
 \newtheorem{prop}[counter]{Proposition}
 \newtheorem*{prop*}{Proposition}
 \newtheorem{rem}[counter]{Remark}
 \newtheorem*{rem*}{Remark}
 \newtheorem{defin}[counter]{Definition}


\title[A nonstandard Grassmannian operator in the presence of a torsion]{A nonstandard operator for almost Grassmannian geometries with a torsion}
\author{ALE\v S N\'AVRAT}

\begin{abstract}
A nonstandard invariant fourth order operator acting on functions on a manifold equipped with an almost Grassmannian structure with an arbitrary trorsion is found by means of the curved translation principle. This operator can be viewed as a Grassmannian analogue of the Paneitz operator well known from conformal geometry. It is obtained by translating a Grassmannian analogue of the Laplace operator to a certain tractor bundle with a specific tractor connection, which is not normal in general.
\end{abstract}

\maketitle

\section{Introduction}
An almost Grassmannian structure on a smooth  manifold $M$ is given by a fixed identification of the tangent bundle $TM$ with the tensor product of two auxiliary vector bundles, together with the identification of their top degree exterior powers.
It turns out that this is a specific example of a so called $|1|$-graded parabolic geometry, i.e. a Cartan geometry of type $(G,P)$, where $P\subset G$ is a parabolic subgroup of a semi-simple Lie group $G$ with its Lie algebra $\g$ endowed with a $|1|$-grading, and so the set of tools of parabolic geometries summarized in \cite{parabook} applies.  Such structures are also known under the name almost Hermitian symmetric structures. The prototypical example from this class is the conformal structure - probably at most studied structure since it is the natural setting for the physics of  massless particles and other  theories in Physics. It was also an original  inspiration and motivation for this paper. 

In this text, we will deal with almost Grassmannian structures for which one of the defining vector bundles has rank two and the other has rank $n\geq 2$. Such a structure is called to be of type $(2,n)$. It  is well-known that  on a four-dimensional manifold $M$, i.e. in the case $n=2$, it is even equivalent to the conformal structure on $M$. This equivalence can be seen from the description of the two structures as classical first-order G-structures. Namely, the identification of $TM$ with the tensor product of two bundles of rank two together with the identification of the top-degree forms yields the reduction of the frame bundle to the structure group $G_0=S(GL(2,\R)\times GL(2,\R))\subset GL(4,\R)$, and this group is known to be isomorphic to the conformal spin group $CSpin(2,2)$. 

Almost Grassmannian structures of type $(2,n)$ for $n>2$ are not equivalent to conformal structures. One of the main differences is that this structure generally does not allow the existence of a torsion-free connection. Nevertheless, one can still generalize some concepts from conformal geometry. One of them is that of a conformally invariant differential operator, like the conformal Laplacian (sometimes called the Yamabe
operator), the Maxwell operator, or the Paneitz operator. Such operators act on sections of natural vector bundles, and may be defined by universal natural
formulas which depend only on the conformal structure and not on any choice
of metric tensor from the conformal class. Similarly, invariant differential operators for almost Grassmannian structures are defined by universal natural formulas which depend only on the structure and not on any particular choice. It turns out that most of conformally invariant operators have their invariant analogues for our structures. 

The aim of this text is to construct an analogue of the Paneitz operator. It is an conformally invariant second power of the Laplace operator,  i.e. a differential operator
\[
\Delta^2: \ce\to\ce[-4],
\]
which acts on functions and has values in densities of conformal wieght -4. It is well known that Paneitz operator on a  locally flat manifold (the case of a manifold $M$ which is locally isomorphic to a homogeneous space $G/P$) corresponds to a nonstandard homomorphism of Verma modules. Moreover, for $n=2$ this is the so--called critical power of Laplace operator and it is of much more subtle nature. Namely, it is not strongly invariant, i.e. it cannot be constructed algebraically from so--called semi--holonomic Verma modules and its invariance can be seen as a consequence of Bianchi identities. For more details  see \cite{Boe} and \cite{Graham}.

Considering an almost Grassmannian geometry given by an identification $TM=E^*\otimes F$ where $E$ is a rank two vector bundle and $F$ is a rank $n\geq 2$ vector bundle, 
an analogue of the Paneitz operator is a fourth order invariant operator
$$
\square : \ce\to\yiiII F^*[-2],
$$
where $\yiiII F^*[-2]:=\yiiII F^*\otimes \ce[-2]$ is an irreducible subbundle of the bundle of differential four-forms which is induced by the representation $\yiiII\R^{n*}$ of $SL(n,\R)$, and the one--dimensional representation $\R[-2]:=(\Lambda^2\R^2)^2$ of $SL(2,\R)$. Indeed, for $n=2$ the target bundle is isomorphic to $(\Lambda^2F^*)^2[-2]=\ce[-4]$ and we have $\square=\Delta^2$. Similarly to Paneitz operator,  its Grassmannian analogue $\square$  also corresponds to a nonstandard homomorphism of Verma modules, see e.g. \cite{BoeCollingwood}. Moreover, it turns out that it is even not strongly invariant, \cite{Navrat}. This indicates that the construction of a curved analogue will be a difficult task if it exists.

The existence of a curved analogue has been proven in the torsion free case in \cite{Gover}. In this paper in Theorem \ref{thm} we generalize this result to the case of almost Grassmannian geometries with an arbitrary torsion. The construction is based on the curved translation principle, see \cite{translation}. We preceed along the lines of the construction of Paneitz operator given in \cite{Qcurv}.

The structure of the paper is as follows. First we recall basic facts about Grassmannian geometry in the realm of parabolic geometries and we explore the structure of some tractor bundles that we need in sequel. In the third section, we give the main result in Theorem \ref{thm} about the existence of an invariant operator and we describe its construction.

\section{Grassmannian tractor calculus}
In this section, we give a background on almost Grassmannian geometries that we need in sequel for the construction of a curved analogue of the nonstandard operator.

\subsection{Notation}
\label{Notation}
The Penrose's abstract index notation is used throughout the article. It allows explicit calculations without involving a choice of basis. The  two fundamental  bundles for Grassmannian structures are denoted by $\ce^{A}$ and $\ce^{A'}$, for their duals we write $\ce_{A}$ and $\ce_{A'}$. As usual,  $\ce^a$ and $\ce_a$ denote the tangent bundle and  the cotangent bundle  respectively, and  the tensor product of bundles is denoted by a concatanation of indices. In analogy with conformal geometry, the line bundle $\bigwedge\nolimits^{\!\!2}\ce^{A'}$ is denoted by $\ce[-1]$, and it is called the bundle of densities of weight $-1$. By dualizing and tensorizing, one defines the bundle of densities $\ce[w]$ for an arbitrary $w\in\mathbb{Z}.$

We also adopt the index notation for forms of \cite{prolongation} in order to obtain formulas in a closed form. Namely, the brackets in the usual notation $\ce_{[AB]}$ for second skew symmetric power of $\ce_A$ will be supressed by using two consecutive indices, i.e. it will be  denoted by  $\ce_{A^1A^2}$ which will be farther abbreviated to
$
\ce_\bA.
$
by using the multi-index $\bA=A^1A^2=[A^1A^2]$. Note that by use of consecutive indices we always mean the alternation over these indices.

\subsection{Almost Grassmannian geometry of type $(2,n)$}
In the abstract index notation,  the  identifications definining an almost Grassmannian structure of type $(2,n)$ read as
\[
\ce^a\cong
\ce_{A'}^A
\; \text{ and } \;
\bigwedge\nolimits^{\!\!2}\ce^{A'}\cong\bigwedge\nolimits^{\!\!n}\ce_A,
\]
 where we consider  $\ce^A$ has an arbitrary fibre dimension $n$ while 
\begin{equation}
\label{dim=2}
\dim\ce^{A'}=2.
\end{equation}
Equivalently, the structure can be defined as a classical first order G-structure with reduction of the structure group $GL(2n,\R)$ to its subgroup $S(GL(2,\R)\times GL(n,\R))$. 
It is well known that this structure corresponds to a $|1|$-graded normal parabolic geometry of type $(G,P)$, where $G=SL(2+n,\R)$ and $P$ is the stabilizer of $\R^2$ in $\R^{2+n}$. The  flat model $G/P$ of the geometry are Grassmannians $\operatorname{Gr}_2(\R^{2+n})$. In the diagram notation, the $|1|$-grading of Lie algebra $\g=\asl(2+n,\R)$ defining the geometry is given by the $A_{n+1}$-diagram with  the second node crossed. Hence $\g=\g_{-1}\oplus\g_0\oplus\g_1$, where
$$
\g_0\cong\mathfrak{s}(\agl (2,\R)\oplus\agl (n,\R)),\quad \g_1\cong(\g_{-1})^* \cong\R^2\otimes\R^{n*}
$$
and $\mathfrak{p}=\g_0\oplus\g_{1}$ is the subalgebra corresponding to parabolic subgroup $P$. 

There exist two basic invariants for the almost Grassmannian structure. In the language of parabolic geometries, these invariants are the two components of the harmonic curvature. In terms of representation theory, the first one lies in the intersection of kernels of the two possible contractions of
$
S^2\R^2\otimes\R^{2*}\otimes\Lambda^2\R^{n*}\otimes\R^n
$
while the second lies in  the kernel of a unique contraction on the space
$
\Lambda^2\R^2\otimes S^3\R^{n*}\otimes\R^n.
$
The former has homogeneity one and thus it corresponds to a torsion of a linear connection on the tangent bundle. The latter has homogeneity two and thus it corresponds to a component of its curvature. For details see \cite[section 4.1]{parabook}.

\subsection{Weyl connections}
\label{connections}
For each parabolic geometry there exist a class of distinguished connections that are invariant with respect to the structure, so called Weyl connections. They may be viewed as analogs of Levi-Civita connections in conformal geometry. We will use these connections to define differential operators. Precisely, we will use so called exact Weyl connections in order to get as simple formulas as possible, \cite{parabook}. These connections form an affine space modeled on exact one forms and they are characterized by the fact that they have the minimal torsion in the sense that the torsion is equal to the torsion invariant of the structure, i.e. the torsion of a Weyl connection satisfies
\begin{equation}
\label{torsion}
T_{ab}{}^c=T^{A'B'C}_{A\ph B\ph C'}\in \Gamma(\ce^{(A'B')C}_{[A\ph B]\ph C'}),
\;\; T^{A'R'C}_{A\ph B\ph R'}=T^{A'B'R}_{A\ph R\ph C'}=0.
\end{equation}
Our convention for the curvature $R_{ab}{}^c{}_d$ and the torsion $T_{ab}{}^c$ of a covariant derivative $\nabla_a$ on the tangent bundle is given by the equation
\[
2\nabla_{[a}\nabla_{b]}u^{c}=R_{ab}{}^c{}_d u^{d}+T_{ab}{}^d\nabla_d u^{c}.
\]
Obviously, a Weyl connection is  given by a product of connections on spinor bundles $\ce^A$ and $\ce_{A'}$. We fix the notation for their curvatures by equations
\[
\begin{aligned}[]
2\nabla_{[a}\nabla_{b]}u^{C'}&=R_{abD'}^{\phantom{ab}C'}u^{D'}+T_{ab}{}^d\nabla_d u^{C'},
\\
2\nabla_{[a}\nabla_{b]}u^{C}&=R_{abD}^{\phantom{ab}C}u^{D}+T_{ab}{}^d\nabla_d u^{C}.
\end{aligned}
\]
Hence 
$
R_{ab}{}^c{}_d=R_{abD}^{\phantom{ab}C}\dt^{D'}_{C'}-R_{abC'}^{\phantom{ab}D'}\dt^C_D.
$
Next, we define a Grassmannian  analogue of the conformal Weyl tensor as the following trace modification of the Riemannian curvature tensor 
\begin{equation}
\label{U_def}
\begin{aligned}
 U_{abD}^{\phantom{ab}C}&=R_{abD}^{\phantom{ab}C} 
+\Pe^{A'B'}_{AD}\dt^{C}_{B}-\Pe^{B'A'}_{BD}\dt^{C}_{A},
\\
U_{abD'}^{\phantom{ab}C'}&=R_{abD'}^{\phantom{ab}C'}
-\Pe^{A'C'}_{AB}\dt^{B'}_{D'}+\Pe^{B'C'}_{BA}\dt^{A'}_{D'},
\end{aligned}
\end{equation}
where  $\Pe_{ab}$ is the Rho tensor for the Weyl connection $\nabla_a$ \cite{parabook}. In the theory of parabolic geometries, $\Pe$ corresponds to the  $\g_1$  part of the Cartan connection and the curvature tensor $U$ is the $\g_0$ part of the curvature $\kappa$ of the Cartan connection. The normality of $\kappa$ results in a co-closedness condition for the analogue of the Weyl tensor  \cite{Gover}
\begin{equation}
\label{coclosedness}
U^{A'B'R}_{R\ph B\ph A}-U^{R'B'A'}_{A\ph B\ph R'}=0.
\end{equation}
According to the description mentioned above, the second invariant for the structure, the harmonic part of the curvature, is the component that fulfills
\begin{equation}
\label{harmonic_curv}
\rho^{\phantom{AB}D}_{ABC}\in\Gamma(\ce_{(ABC)}^{\phantom{(AB}D}[-1]), \; \rho^{\phantom{AB}R}_{ABR}=0.
\end{equation}

\subsection{Standard tractor and cotractor bundle}
\label{standard_tractors}
By definition, the standard tractor bundle $\ce^\al$ is induced by the standard representation of $\g$ on $\R^{n+2}$. The inclusion of the $\mathfrak{p}$--invariant subspace  $\R^2\hookrightarrow \R^{2+n}$ gives rise to the composition series 
\[
\xymatrix{\ce^{A'}\ar@{^{(}->}[r]^{X}&\ce^\al \ar@{->>}[r]^{Y}&\ce^A}
\]
which we write as $\ce^\al=\ce^A\lpl\ce^{A'}$.  Similarly, the composition series for the standard cotractor bundle reads as $\ce_\al=\ce_{A'}\lpl\ce_{A}$. Let us remark that a choice of the Weyl structure, or equivalently a choice of a nowhere vanishing section of $\ce[-1]$, splits the composition series of these bundles into a direct sum. Consequently, the composition series for all tractor bundles split and we can display their sections as `vectors'. In sequel, we focus on cotractors, i.e. sections of the cotractor bundles. A standard cotractor written  in a vector form reads $v_\al=(v_{A'}\: v_A)$.  Alternatively, the sections will be written in terms of injectors $Y^\al_A\in\ce^\al_A$, $X^\al_{A'}\in\ce^\al_{A'}$ and $Y^{A'}_\al\in\ce^{A'}_\al$, $X^A_\al\in\ce^A_\al$ respectively. We assume the injectors are normalized in such a way that $X^B_\al Y^\al_A=\dt{}_A^B$, $X^\al_{A'}Y^{B'}_\al =\dt{}_{A'}^{B'}$. Then  a standrad cotractor is given by
\[
v_\al
=Y^{A'}_\al v_{A'}+X^A_\al v_A.
\]
The restriction of the inducing representation  to $\g_1$ and $\g_{-1}$ gives rise to  actions of a $\fii\in T^*M$ and $\xi\in TM$  on sections of $\ce_\al.$ It is easy to see that the actions are given by
\[
\begin{aligned}
\Upsilon\bullet\begin{pmatrix}v_{A'} \\v_A\end{pmatrix}
&=\begin{pmatrix}0 \\-\Upsilon^{R'}_A v_{R'}\end{pmatrix}
&\text{and }&&
\xi\bullet\begin{pmatrix}v_{A'} \\v_A\end{pmatrix}
=\begin{pmatrix}-\xi^R_{A'}v_R \\0\end{pmatrix}.
\end{aligned}
\]
By \cite{parabook}, a general formula for the normal connection $\nabla^\mathcal{T}$ on a section $t=(t_i)$ of $\mathcal{T}$ reads as
$
(\nabla^\mathcal{T}_\xi t)_i=\nabla_{\xi}t_i+\Pe(\xi)\bullet t_{i-1}+\xi\bullet t_{i+1}.
$
Substituting the above equations we compute the normal connection on the standard cotractor bundle $\ce_\al$:
\[
\nabla^{A'}_A\begin{pmatrix}v_{C'} \\v_C\end{pmatrix}
=\begin{pmatrix}\nabla^{A'}_Av_{C'}-v_A\dt^{A'}_{C'} \\ \nabla^{A'}_Av_C+\Pe^{A'R'}_{AC}v_{R'}\end{pmatrix}.
\]
Note that $\nabla$ on the left hand side denotes the tractor connection while the same symbol on the right stands for the Weyl connection.
For the curvature of the tractor connection we use the notation 
$\Om_{ab}\sharp v_\gamma=(2\nabla_{[a}\nabla_{b]}-T_{ab}{}^e\nabla_e)v_\gamma$. Then one computes
\begin{equation}
\label{standard_curvature}
\Om_{ab}\sharp \begin{pmatrix}v_{C'} \\v_C\end{pmatrix}=
\begin{pmatrix}
T_{ab}{}^R_{C'}v_R-U_{ab}{}^{R'}_{C'}v_{R'} \\
-U_{ab}{}^R_Cv_R+Q_{ab}{}^{R'}_C v_{R'}
\end{pmatrix},
\end{equation}
where
$
Q_{abc}:=(d^\nabla\Pe)_{abc}=2\nabla_{[a}\Pe_{b]c}-T_{ab}{}^i\Pe_{ic}.
$
This tensor is usually called the (generalized) Cotton--York tensor to $\nabla$, see \cite{parabook}.

\subsection{Tractor bundle $\ce_{[\al\bt]}$}
\label{tractors}
The composition series for the second skewsymmetric power of the standard tractor bundle can be deduced from the composition series of $\ce_\al$. Using the isomorphism $\ce_{A'}\cong\ce^{A'}[1]$ it can be written as
\begin{equation}
\label{comp_series}
\ce_{[\al\bt]}=\ce[1]\lpl\ce^{A'}_A[1]\lpl\ce_{[AB]}.
\end{equation}
In terms of injectors, a section of $\ce_{[\al\bt]}$ reads as
\[
v_{\al\bt}=Y^{A'}_{[\al} Y^{B'}_{\bt]}\sig\ep_{A'B'}
+X^{A}_{[\al} Y^{B'}_{\bt]}\mu^{A'}_A\ep_{A'B'}
+X^{A}_{[\al} X^{B}_{\bt]}\rho_{AB}.
\]
An advantage of this notation is an easy computation of the actions of $\Upsilon\in T^*M$ and $\xi\in TM$. In the vector notation, they read as 
\[
\Upsilon\bullet\begin{pmatrix}\sig\\ \mu^{A'}_A \\ \rho_{AB}\end{pmatrix}
=\begin{pmatrix}0\\-2\Upsilon^{A'}_A \sig \\-\Upsilon_{R'[A}\mu^{R'}_{B]}\end{pmatrix}
\; \text{ and } \;
\xi\bullet\begin{pmatrix}\sig\\ \mu^{A'}_A \\ \rho_{AB}\end{pmatrix}
=\begin{pmatrix}-\frac12\xi^{R}_{R'} \mu^{R'}_R \\-2\xi^{RA'}\rho_{AR}\\0\end{pmatrix}.
\]
Substituting these equations into the general formula for the normal tractor connection, we get a formula for the connection on $\ce_{[\al\bt]}$:
\begin{equation}
\label{normal_connection}
\nabla^{A'}_A\begin{pmatrix}\sig\\ \mu^{C'}_C \\ \rho_{CD}\end{pmatrix}
=\begin{pmatrix}\nabla^{A'}_A \sig - \frac12\mu^{A'}_A\\
\nabla^{A'}_A\mu^{C'}_C  +2\Pe^{A'C'}_{AC} \sig +2 \ep^{A'C'}\rho_{AC}
\\ \nabla^{A'}_A\rho_{CD}-\Pe^{A'R'}_{A[C} \mu_{D]R'}\end{pmatrix}.
\end{equation}
\begin{rem}\normalfont \label{rem}
Let us remark that the invariance of tractor connections and their curvatures and all geometric objects we construct from them can be  also checked directly by computing their linearized transformation, denoted by $\dt$. 
By general formulas in propositions 5.1.5, 5.1.6 and 5.1.8 in \cite{parabook},   we get
\begin{align*}
\dt\nabla^{A'}_Av^{C'}&=-\Upsilon^{C'}_Av^{A'}, \;\;
\dt\nabla^{A'}_Av_{C'}=\Upsilon^{R'}_Av_{R'}\dt^{A'}_{C'},
\\
\dt\nabla^{A'}_Av_{C}&=-\Upsilon^{A'}_Cv_{A}, \;\;
\dt\nabla^{A'}_Av^{C}=\Upsilon^{A'}_Rv^{R}\dt^C_A.
\end{align*}
For $f\in\ce[w]$ we have $\dt(\nabla^{A'}_Af)=w\Upsilon^{A'}_Af$ and the transformation of derivatives of higher tensor products is obtained by the Leibnitz rule. The  curvature terms defined in \ref{connections} and \ref{tractors} transform as follows.
\begin{align*}
\dt U_{ab}{}^{C'}_{D'}&=\Upsilon^{C'}_R T_{ab}{}^{R}_{D'},
\\
\dt U_{ab}{}^{C}_{D}&=-\Upsilon^{R'}_D T_{ab}{}_{R'}^C,
\\
\dt Q_{abc}&=-\Upsilon^{R'}_C U_{ab}{}^{C'}_{R'}+\Upsilon^{C'}_R U_{ab}{}^R_C.
\end{align*}
Applying these basic formulas one can check that all invariant objects constructed in this paper indeed satisfy $\dt=0$.
\end{rem}

\section{The nonstandard operator}

In terms of our notation, a Grassmannian analogue of the Paneitz operator is an invariant operator
\[
\square_{\bA \bB}: \ce\to\yiiII\ce_{\bA \bB}[-2]
\]
such that its leading part is given by the projection of a nonzero multiple of $\Delta_{\bA}\Delta_{\bB}$ to the target bundle. 
We construct such an operator by translating a Grassmannian analogue of the conformally invariant version of the Laplace operator  (also called Yamabe operator). It  is an operator
$
\square_{AB}:\ce[-1]\to\ce_{[AB]}[-2]
$
which in terms of a Weyl connection reads
\begin{equation}
\label{Laplace}
\square_{AB}=\nabla_{R'[A}\nabla^{R'}_{B]}-\Pe_{R'[AB]}^{\phantom{R'[A}R'}.
\end{equation}
It is easy to see that  this formula is invariant in a strong sense and thus defines an invariant operator on each tractor bundle $\mathcal{T}[-1]$ of weight $-1$ by coupling $\nabla$ to a tractor connection on $\mathcal{T}$.  Assume $\mathcal{T}=\ce_{[\gamma\dt]}$.  Then by \eqref{comp_series} the composition series for the initial bundle of $\square_{AB}$ is 
$\ce_{[\gamma\dt]}[-1]=\ce\lpl\ce^{C'}_C\lpl\ce_{[CD]}[-1]$
while the target bundle decomposes as follows. Note that we use the notation for skew symmetric products introduced in \ref{Notation}.
\[
\ce_{\bA}\otimes\ce_{[\gamma\dt]}[-2]=
\ce_{\bA}[-1]\lpl
\begin{matrix}\ce^{A'}\otimes\yiIIi\ce_{\bA B}[-1]\\\oplus\\ \ce^{A'}\otimes\ce_{[\bA B]}[-1]\end{matrix}
\lpl
\begin{matrix}\yiiII\ce_{\bA \bB}[-2]\\\oplus\\ \yiIIIi\ce_{\bA \bB}[-2]\\ \oplus\\ \ce_{[\bA \bB]}[-2]\end{matrix}
\]
We will show that for a suitable connection on $\ce_{[\gamma\dt]}$ we obtain the Grassmannian nonstandard operator as a composition
\[
\xymatrix{
\ce \ar[r]^-{S_{\gamma\dt}} &\ce_{[\gamma\dt]}[-1] \ar[r]^-{\widetilde\square_{\bA}} & \ce_{\bA[\gamma\dt]}[-2] \ar@{->>}[r] & \yiiII\ce_{\bA \bB}[-2],
}
\]
where $S_{\gamma\dt}$ is a differential splitting operator and $\widetilde\square_{\bA}$ is the second order operator  defined by \eqref{Laplace} but for a certain tractor connection $\widetilde{\nabla}^{A'}_A$. It turns out that this connection is not normal in general. Namely, in terms of injectors $X^A_\al, Y^{A'}_\al$ defined in section \ref{standard_tractors} and in temrs of curvature tensors  $U$ and $Q$  defined in sections \ref{connections} and \ref{tractors}, it is given by
\begin{equation}
\label{connection}
\begin{aligned}
\widetilde{\nabla}^{A'}_A\left(Y_{C'[\al}Y^{C'}_{\bt]}\right)&=X^{C}_{[\al}Y_{\bt]D'}\left(-2\Pe^{A'D'}_{AC}
+4\Pe^{(A'D')}_{[AC]}+2U^{R'A'D'}_{CAR'}\right)
\\& \mkern-18mu\mkern-18mu +X^C_{[\al}X^{D}_{\bt]}\left(2Q^{A'R'}_{A[CD]R'}-4\nabla_{R'[C}U^{S'(A'R')}_{D]AS'}
+4\nabla_{R'[C}U^{S'[A'R']}_{D]AS'}\right),
\\
\widetilde{\nabla}^{A'}_A\left(X^{C}_{[\al}Y^{D'}_{\bt]}\right)&=Y_{C'[\al}Y^{C'}_{\bt]}\left(\frac12\ep^{A'D'}\dt^C_A\right)
+X^R_{[\al}Y^{R'}_{\bt]}\left(T^{A'D'C}_{ARR'}\right)
\\&\mkern-18mu\mkern-18mu +X^R_{[\al}X^S_{\bt]}\left(-\Pe^{A'D'}_{A[R}\dt^C_{S]}-U^{A'D'C}_{A[RS]}\right),
\\
\widetilde{\nabla}^{A'}_A\left(X^{C}_{[\al}X^{D}_{\bt]}\right)&=X^D_{[\al}Y^{A'}_{\bt]}\left(2\dt^C_A\right).
\end{aligned}
\end{equation}
We formulate the  precise result in the following theorem, where we use again the notation  from section \ref{Notation} for skew symmetric products.
\begin{thm}
\label{thm}
Let $M$ be an almost Grassmannian structure of type $(2,n)$ with an arbitrary torsion. There  exists a fourth order invariant operator,
\[
\square_{\bA \bB}: \ce\to\yiiII\ce_{\bA \bB}[-2]
\]
which is nontrivial on flat structures, i.e. there exists a
curved analogue of the nonstandard operator on functions.
\\
The operator is obtained by a translation of operator \eqref{Laplace} to the tractor bundle $\ce_{[\al\bt]}[-1]$ with  connection \eqref{connection}. In terms of a Weyl connection, it  is given by 
\begin{equation}
\label{formula}
\begin{aligned}
\widetilde\square_{\bA}S_{\bB}&=\frac12\Delta_{\bA}\Delta_{\bB}
+\nabla_{R'A^1}\Pe^{R'}_{A^2S'B^1}\nabla^{S'}_{B^2}
+\Pe^{R'S'}_{A^1B^1}\nabla_{A^2R'}\nabla_{B^2S'} 
\\
&
-\Pe_{R'A^1B^1}^{\phantom{R'}R'}\Delta_{A^2B^2}
+\frac12\Pe_{R'\bA}^{\phantom{R'}R'}\Delta_{\bB}
\\
&
+U^{R'S'R}_{A^1\bB}\nabla_{A^2R'}\nabla_{RS'}
+U^{\phantom{S'A}S'R}_{S'A^1\bB}\Delta_{A^2R}
\\
&
-2\nabla_{S'B^1}U^{P'(R'S')}_{B^2A^1P'}\nabla_{A^2R'}
+2\nabla_{S'B^1}U^{P'[R'S']}_{B^2A^1P'}\nabla_{A^2R'}
\\
&
-\nabla_{R'A^1}U^{R'S'R}_{A^2\bB}\nabla_{RS'}
+\Pe_{B'A^1B^1}^{\phantom{B'A}R'}T^{B'\phantom{DR}R}_{A^2B^2R'S'}\nabla^{S'}_R 
\\ &
+U_{B'A^1\bB}^{\phantom{B'A}R'R}T^{B'\phantom{R'R}S}_{A^2RR'S'}\nabla^{S'}_S 
+Q^{R'S'}_{A^1\bB S'}\nabla_{A^2R'},
\end{aligned}
\end{equation}
followed by a projection to the target bundle.
\end{thm}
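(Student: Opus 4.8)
The plan is to construct the operator via the curved translation principle, following the blueprint of \cite{Qcurv} but working throughout with an arbitrary torsion. The starting point is the second–order operator $\square_{AB}$ of \eqref{Laplace}, which by the displayed remark after \eqref{Laplace} is strongly invariant and hence couples to any tractor connection. The whole construction is then a three–step composition $\ce \xrightarrow{S_{\gamma\dt}} \ce_{[\gamma\dt]}[-1] \xrightarrow{\widetilde\square_{\bA}} \ce_{\bA[\gamma\dt]}[-2] \twoheadrightarrow \yiiII\ce_{\bA\bB}[-2]$. Concretely I would first pin down the differential splitting operator $S_{\gamma\dt}:\ce\to\ce_{[\gamma\dt]}[-1]$ mapping a function $f$ to the section whose bottom slot is $f$ (up to normalization), i.e. an explicit BGG–type splitting; its top two slots are first– and second–order in $f$ and are determined by requiring that $\widetilde\nabla^{A'}_A$ applied to $S_{\gamma\dt}f$ lies, modulo lower slots, in the kernel of the relevant Kostant codifferential — this is where the nonstandard (non–normal) modification \eqref{connection} of the connection enters, and it must be chosen so that the would–be obstruction to prolonging the splitting is absorbed.

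The second step is to compute $\widetilde\square_{\bA}$ acting on $S_{\gamma\dt}f$ by coupling the Weyl connection in \eqref{Laplace} to the deformed tractor connection \eqref{connection} on $\ce_{[\gamma\dt]}$, then projecting to the quotient bundle $\yiiII\ce_{\bA\bB}[-2]$. I would write $v_{\gamma\dt}=S_{\gamma\dt}f$ in the injector frame $Y_{C'[\gamma}Y^{C'}_{\dt]}$, $X^C_{[\gamma}Y^{D'}_{\dt]}$, $X^C_{[\gamma}X^D_{\dt]}$, apply $\widetilde\nabla^{R'}_{[A^1}\widetilde\nabla^{R'}_{A^2]}$ using the three rules in \eqref{connection} together with the Weyl–connection action on the base indices, subtract the $\Pe$ correction term of \eqref{Laplace}, and collect the coefficient of the $X\wedge X$ injector, which is the component landing in the four–form bundle. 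The leading term $\tfrac12\Delta_{\bA}\Delta_{\bB}$ comes from iterating the top–to–bottom parts of $S_{\gamma\dt}$ and $\widetilde\nabla$ twice; all the curvature–corrected terms in \eqref{formula} — the $\nabla\Pe\nabla$, $\Pe\nabla\nabla$, $U\nabla\nabla$, $\nabla U\nabla$, and $UT\nabla$, $\Pe T\nabla$, $Q\nabla$ pieces — are bookkeeping of the cross terms produced by the noncommutativity of $\widetilde\nabla$ and by the $U$– and $T$– and $Q$–dependent entries of \eqref{connection}. Nontriviality on flat structures is immediate: when $U=Q=T=0$ everything reduces to $\tfrac12\Delta_{\bA}\Delta_{\bB}$ projected to $\yiiII$, which is the known nonstandard homomorphism of Verma modules and is nonzero.

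The third, and genuinely substantive, step is invariance: one must check that the composite $\widetilde\square_{\bA}S_{\bB}$, followed by the projection, is independent of the choice of Weyl connection, i.e. that its linearized transformation $\dt$ under a change $\nabla\mapsto\nabla+\Upsilon$ vanishes. Here I would use exactly the toolkit set up in Remark~\ref{rem}: the transformation rules $\dt\nabla^{A'}_Av^{C'}=-\Upsilon^{C'}_Av^{A'}$ etc., the density rule $\dt(\nabla^{A'}_Af)=w\Upsilon^{A'}_Af$, the Leibniz rule, and crucially the curvature transformations $\dt U_{ab}{}^{C'}_{D'}=\Upsilon^{C'}_RT_{ab}{}^R_{D'}$, $\dt U_{ab}{}^C_D=-\Upsilon^{R'}_DT_{ab}{}^C_{R'}$, $\dt Q_{abc}=-\Upsilon^{R'}_CU_{ab}{}^{C'}_{R'}+\Upsilon^{C'}_RU_{ab}{}^R_C$. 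The point is that the deformation \eqref{connection} is tuned so that $\widetilde\nabla^{A'}_A$, although not normal, still transforms as a genuine tractor connection on $\ce_{[\al\bt]}$ — i.e. its $\dt$–variation is the tensorial action of $\Upsilon$ dictated by the $\g_1$–action on $\ce_{[\al\bt]}$ — so that $\widetilde\square_{\bA}$ remains strongly invariant on $\ce_{[\gamma\dt]}[-1]$, and $S_{\gamma\dt}$ is an invariant splitting; both facts then follow from the general theory. Verifying that \eqref{connection} really has this property, and that the leftover terms which are not of tractor type cancel against $\dt S_{\gamma\dt}$ after projection onto $\yiiII$, is the main obstacle — it is a lengthy but in principle mechanical computation, and the coclosedness identity \eqref{coclosedness} together with the trace conditions \eqref{torsion} and \eqref{harmonic_curv} on $T$, $U$ and $\rho$ are precisely the algebraic facts that make the cancellations work. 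I expect the bulk of the real work, and the only place where something could genuinely fail, to be in confirming that the specific coefficients in \eqref{connection} (in particular the symmetric/skew splittings $U^{(A'D')}$ versus $U^{[A'D']}$) are the unique ones making all $\dt$–obstructions vanish modulo the image of the codifferential defining the projection to $\yiiII\ce_{\bA\bB}[-2]$.
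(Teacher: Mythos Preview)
Your overall architecture---compose an invariant splitting $S_{\gamma\dt}$ with $\widetilde\square_{\bA}$ coupled to the modified connection \eqref{connection}, then project---is exactly the paper's. But you have misidentified where the difficulty lies, and the step that actually carries the proof is absent from your plan.

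A minor point first: the splitting has $0$ in the top (quotient) slot, $\nabla^{A'}_Af$ in the middle, and $\tfrac12\Delta_{AB}f$ in the bottom; it is a tractor-$D$ analogue, not a BGG splitting with $f$ sitting in the bottom.

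The real gap is in your invariance argument. The corrections in \eqref{connection} are built from the invariant tensors $T$, $U$, $Q$, so $\widetilde\nabla$ is automatically a genuine tractor connection; there is nothing to verify there. Likewise $S_{\gamma\dt}$ is invariant, so $\dt S_{\gamma\dt}=0$ and there are no ``leftover terms'' to cancel against it. Consequently $\widetilde\square_{\bA}S_{\gamma\dt}:\ce\to\ce_{\bA[\gamma\dt]}[-2]$ is invariant for free. The genuine obstacle is that $\yiiII\ce_{\bA\bB}[-2]$ lives in the \emph{bottom} (injecting) slot of the target composition series, not in a quotient, so extracting it is Weyl-dependent: under a change of scale the bottom slot shifts by $\Upsilon\bullet(\text{middle slot})$. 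Invariance of the $\yiiII$ component therefore hinges on controlling the middle slot $\widetilde\square_{AB}S^{C'}_{C}f$.

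This is the content of the paper's Lemma~\ref{istotallyskew}: with the connection \eqref{connection} one has $\widetilde\square_{A(B}S^{C'}_{C)}f=0$, so the middle slot lands entirely in the totally antisymmetric piece $\ce^{C'}_{[ABC]}[-1]$. Since $\yiiII$ does not occur in the tensor product $\ce^{A'}\otimes\ce_{[\bA B]}[-1]\otimes\ce^{C'}_C$, the $\Upsilon$-shift cannot reach the $\yiiII$ component and invariance follows. The proof of that lemma is the substantive core and is not a routine bookkeeping exercise: it requires the Bianchi identity, specifically the relations \eqref{B1}, \eqref{B2}, \eqref{B5}, \eqref{B6} expressing the relevant components of $U$ in terms of the torsion. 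The coclosedness \eqref{coclosedness} and the trace conditions on $T$ and $\rho$ that you list are inputs to Lemma~\ref{B}, but on their own they do not produce the cancellation. Your proposal never mentions the Bianchi identity; without it you cannot explain why the particular symmetric/antisymmetric split in \eqref{connection} is the right one, nor close the argument.

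The paper does note that a brute-force $\dt$-check of \eqref{formula} is possible, so your fallback route is not illegitimate; but it is longer, and you have not said how the third- and second-order $\Upsilon$-terms would organise themselves to vanish after the $\yiiII$ projection.
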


\subsection{Proof of theorem \ref{thm} in the torsion free case}
The formula for  $\widetilde{\nabla}^{A'}_A$ simplifies considerably in the case that the torsion vanish. It follows from the general theory, see Theorem 4.1.1 in \cite{parabook}, that  the curvature $U_{abc}^{\phantom{ab}d}$ defined by \eqref{U_def} has only one nonzero irreducible component in such a case, namely the harmonic part $\rho^{\phantom{AB}D}_{ABC}$ defined by \eqref{harmonic_curv}. Moreover, the tensor $Q_{abc}$ can be expressed purely  in terms of this curvature and $\Pe_{[ab]}$  vanish. All these facts can be also checked directly using the consequences of the Bianchi identity  summarized in appendix, Lemma \ref{B}.  But the harmonic curvature $\rho^{\phantom{AB}D}_{ABC}$ is symmetric in three lower unprimed indices by definition and thus it cannot appear in the formula for  $\widetilde{\nabla}^{A'}_A$. 
Hence in the torsion free case  the formula \eqref{connection} simplifies to
\[
\begin{aligned}
\widetilde{\nabla}^{A'}_A\left(Y_{C'[\al}Y^{C'}_{\bt]}\right)&=X^{C}_{[\al}Y_{\bt]D'}\left(-2\Pe^{A'D'}_{AC}\right)
\\
\widetilde{\nabla}^{A'}_A\left(X^{C}_{[\al}Y^{D'}_{\bt]}\right)&
=Y_{C'[\al}Y^{C'}_{\bt]}\left(\frac12\ep^{A'D'}\dt^C_A\right)
+X^R_{[\al}X^S_{\bt]}\left(-\Pe^{A'D'}_{A[R}\dt^C_{S]}\right),
\\
\widetilde{\nabla}^{A'}_A\left(X^{C}_{[\al}X^{D}_{\bt]}\right)&=X^D_{[\al}Y^{A'}_{\bt]}\left(2\dt^C_A\right),
\end{aligned}
\]
which coincides with formula \eqref{normal_connection} for the normal tractor connection. Thus the following proposition proves the theorem in the torsion free case. 

\begin{prop}
In the torsion free case, the translation of operator \eqref{Laplace} to the tractor bundle $\ce_{[\al\bt]}[-1]$ with respect to the normal tractor connection on that bundle yields a curved analogue of the nonstandard operator on functions.
\end{prop}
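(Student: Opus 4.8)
The plan is to carry out the curved translation principle explicitly, exactly as indicated by the composition displayed before the theorem, and to check that no obstruction arises in the torsion-free setting. First I would fix an exact Weyl connection $\nabla_a$ and compute the differential splitting operator $S_{\gamma\dt}:\ce\to\ce_{[\gamma\dt]}[-1]$. Since $\ce_{[\gamma\dt]}[-1]=\ce\lpl\ce^{C'}_C\lpl\ce_{[CD]}[-1]$, the splitting operator must be the BGG-type splitting that inverts the canonical projection onto the top slot; using the formula \eqref{normal_connection} for the normal connection one determines its slots by imposing that $S_{\gamma\dt}f$ lie in the kernel of the relevant codifferential, which forces the middle slot to be a multiple of $\nabla^{C'}_C f$ and the bottom slot to be a multiple of $\square_{CD}f$ plus curvature corrections. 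This is routine once \eqref{normal_connection} is in hand, but the exact numerical constants must be tracked carefully because they feed into the leading symbol of the final operator.

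Next I would apply the strongly invariant operator $\widetilde\square_{\bA}=\nabla_{R'[A}\nabla^{R'}_{B]}-\Pe_{R'[AB]}{}^{R'}$ of \eqref{Laplace}, coupled to the \emph{normal} tractor connection on $\ce_{[\gamma\dt]}$ (legitimate in the torsion-free case precisely because, as the paragraph preceding the proposition shows, formula \eqref{connection} collapses to \eqref{normal_connection} when $T=0$). Acting on $S_{\gamma\dt}f$ and then projecting onto the irreducible quotient $\yiiII\ce_{\bA\bB}[-2]$, one gets a fourth-order operator on $\ce$. I would verify two things: (i) the leading symbol is a nonzero multiple of the projection of $\Delta_{\bA}\Delta_{\bB}$ — this is a purely algebraic computation in the associated graded, comparing the top-slot contribution $\nabla^2$ from $\widetilde\square_{\bA}$ composed with the bottom-slot $\square_{CD}$ of $S$, and checking that the resulting $SL(2)\times SL(n)$-equivariant map into $\yiiII$ is nonzero; (ii) invariance, which is automatic because $\widetilde\square_{\bA}$ is strongly invariant, $S_{\gamma\dt}$ is an invariant splitting operator, and the projection onto an irreducible subquotient is invariant — alternatively one checks $\dt(\,\cdot\,)=0$ directly using Remark \ref{rem}.

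The step I expect to be the real content — and the place where the torsion-free hypothesis is genuinely used — is showing that the projection onto $\yiiII$ does not kill the leading term, i.e. that the fourth-order operator is \emph{nontrivial on flat structures}. On the flat model this reduces to a statement about homomorphisms of (generalized) Verma modules: one must exhibit that the composite $\ce\xrightarrow{S}\ce_{[\gamma\dt]}[-1]\xrightarrow{\widetilde\square}\ce_{\bA[\gamma\dt]}[-2]\twoheadrightarrow\yiiII\ce_{\bA\bB}[-2]$ corresponds to the nonstandard homomorphism in the relevant BGG pattern rather than to a composition of two standard ones (which would vanish, or factor through a different subquotient). I would settle this by a symbol/weight count: the nonstandard arrow is the unique fourth-order arrow in the relevant Hasse diagram, and since the target $\yiiII\ce_{\bA\bB}[-2]$ sits at the correct place and the leading term computed in (i) above is nonzero, the operator must be (a multiple of) the nonstandard operator. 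For $n=2$ this is the critical Paneitz case and one can cross-check against \eqref{formula} specialized to $\yiiII\to\Lambda^2$, recovering $\tfrac12\Delta^2$ plus lower-order curvature terms. The one subtlety to watch is that in the torsion-free case $\Pe_{[ab]}=0$ and $Q$ is expressible through the harmonic curvature $\rho$ (by Lemma \ref{B}), so several terms in the general formula \eqref{formula} either drop out or recombine; confirming that what survives still has the right leading behaviour is the bookkeeping heart of the proof.
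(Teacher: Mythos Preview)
Your plan has the right overall shape, but there is a genuine gap at the invariance step, and you have misidentified where the torsion-free hypothesis does its work.

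You assert that invariance is ``automatic because \ldots the projection onto an irreducible subquotient is invariant''. But $\yiiII\ce_{\bA\bB}[-2]$ sits in the \emph{bottom} of the composition series of $\ce_{\bA}\otimes\ce_{[\gamma\dt]}[-2]$, i.e.\ as a direct summand of the injecting subbundle, not as a quotient. There is no invariant bundle map from a tractor bundle onto a piece of its smallest filtration step; the two-headed arrow in the diagram preceding the theorem only acquires meaning once one knows that the section $\widetilde\square_{\bA}S_{\gamma\dt}f$ already lies in that bottom slot. Under a change of Weyl structure the bottom slot of a tractor picks up contributions from the slots above it, so the real content of the proof is to show that the top and middle slots of $\widetilde\square_{\bA}S_{\gamma\dt}f$ vanish.

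This is exactly what the paper does, and it is where torsion-freeness is actually used. The splitting is the tractor-$D$ analogue $S_{\gamma\dt}f=(0,\ \nabla^{C'}_Cf,\ \tfrac12\Delta_{CD}f)$ with top slot \emph{zero} --- not a BGG splitting with top slot $f$ as your description suggests. A direct computation with \eqref{normal_connection} shows the top slot of $\widetilde\square_{\bA}S_{\gamma\dt}f$ vanishes for free. The middle slot is the expression \eqref{obstruction}, and the argument that it vanishes runs as follows: its third-order symbol is zero by \eqref{dim=2}; hence it is a first-order invariant operator whose symbol contains curvature; invariance forces the Rho-terms to cancel, so only the harmonic curvature $\rho^{\phantom{AB}D}_{ABC}$ could contribute; but $\rho$ is totally symmetric in its lower unprimed indices \eqref{harmonic_curv} and therefore cannot appear in an expression skew in $A,B$. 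Thus the middle slot is zero, the bottom slot is invariant on its own, and projecting it to $\yiiII$ gives the desired operator. Your proposal instead places the torsion-free hypothesis in the nontriviality check, but nontriviality on locally flat structures is a statement on the flat model (where both torsion and curvature vanish anyway) and is the routine symbol computation you describe; it is not where the hypothesis is needed.
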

\begin{proof}
We consider a differential splitting $S_{\al\bt}: \ce\to\ce_{[\al\bt]}[-1]$ defined by
\begin{equation}
\label{split}
S_{\al\bt}f=\begin{pmatrix}0\\ \nabla^{A'}_A f\\ \frac12\Delta_{AB}f\end{pmatrix},
\end{equation}
where we denote
$
\Delta_{AB}:=\nabla_{R'[A}\nabla^{R'}_{B]}.
$
It is an analogue of the conformal tractor--D operator, and it is easy to check its invariance by a direct computation.
Applying formula \eqref{normal_connection} for the normal tractor connection  twice we get
\[
\widetilde\nabla^{A'}_A\widetilde\nabla^{B'}_BS_{\gamma\dt}
=\begin{pmatrix}
-\frac12\nabla^{A'}_A\nabla^{B'}_B  -\frac12\nabla^{B'}_B\nabla^{A'}_A 
-\frac12\ep^{B'A'}\Delta_{BA}\\
\nabla^{A'}_A\nabla^{B'}_B\nabla^{C'}_C  + \ep^{B'C'}\nabla^{A'}_A\Delta_{BC}
-\Pe^{A'C'}_{AC}\nabla^{B'}_B  \\
 +\ep^{A'C'}\nabla^{B'}_B\Delta_{AC}   -2\ep^{A'C'} \Pe^{B'R'}_{B[A}\nabla_{C]R'}\\
 \frac12\nabla^{A'}_A\nabla^{B'}_B\Delta_{CD}
-\nabla^{A'}_A\Pe^{B'R'}_{B[C} \nabla_{D]R'}
\\
-\Pe^{A'R'}_{A[C}\nabla^{B'}_{|B|}\nabla_{D]R'}
-\Pe^{A'B'}_{A[C}\Delta_{D]B}
\end{pmatrix}.
\]
Now substituting the corresponding term in defining equation \eqref{Laplace} of $\square_{AB}$, we get zero in  top slot of $\widetilde\square_{AB}S_{\gamma\dt}f$, while in the middle slot we have
\begin{equation}
\label{obstruction}
\begin{aligned}
\widetilde\square_{AB}S^{C'}_{C}&=
\Delta_{AB}\nabla^{C'}_C  
+2\nabla^{C'}_{[A}\Delta_{B]C} 
 \\
&-\Pe^{C'R'}_{[AB]}\nabla_{CR'}+\Pe^{C'R'}_{[A|C|}\nabla_{B]R'}
+\Pe^{R'C'}_{[A|C|}\nabla_{B]R'}+\Pe^{R'}_{[AB]R'}\nabla^{C'}_{C}.
\end{aligned}
\end{equation}
We will show that its action on function also gives zero if the torsion vanishes. First observe that its third order symbol vanishes due to \eqref{dim=2}. Thus the principal symbol is of the first order and contains a curvature. And since it is invariant by construction, the terms containing Rho tensors cancel out and the principal symbol must be given purely in terms of  the harmonic curvature $\rho^{\phantom{AB}D}_{ABC}$. But this curvature component cannot appear because of its symmetries \eqref{harmonic_curv}. Hence we must have
\[
\widetilde\square_{AB}S^{C'}_{C} f=0.
\]
and so the formula in the bottom slot of tractor $\widetilde\square_{AB}S_{\gamma\dt}$ is  invariant.  Making this formula explicit and using then notation introduced in \ref{Notation} we get 
\[
\begin{matrix}
\widetilde\square_{\bA}S_{\bB}=\frac12\Delta_{\bA}\Delta_{\bB}
+\nabla_{R'A^1}\Pe^{R'}_{A^2S'B^1}\nabla^{S'}_{B^2}
+\Pe^{R'S'}_{A^1B^1}\nabla_{A^2R'}\nabla_{B^2S'} \\
-\Pe_{R'A^1B^1}^{\phantom{R'}R'}\Delta_{A^2B^2}
+\frac12\Pe_{R'\bA}^{\phantom{R'}R'}\Delta_{\bB}.
\end{matrix}
\]
The composition with the unique projection $\ce_{\bA\bB}[-2]\to\yiiII\ce_{\bA\bB}[-2]$ defines visibly  an invariant operator which on locally flat structures  coincides with the nonstandard operator up to a scalar multiple. 
\end{proof}

\subsection{Proof of theorem \ref{thm} in the case of a nonvanishing torsion} In the presence of the torsion,  $\widetilde\square_{AB}S^{C'}_{C}$ is determined by a nonzero second order operator with the torsion in its leading part. Indeed, using the fact that the symmetry \eqref{torsion} of the torsion implies 
\[
\nabla_{R'B}\nabla^{R'}_{C} f=\nabla_{R'[B}\nabla^{R'}_{C]} f,
\] 
the third order terms in \eqref{obstruction} sum up to 
 \[
\begin{aligned}
 -2\nabla^{(R'}_{[A}\nabla^{C')}_{B]}\nabla_{CR'} f 
=R^{R'C'R}_{[AB] C}\nabla_{RR'}f+R^{R'C'S'}_{[AB] R'}\nabla_{CS'}f
-T^{R'C'S}_{A\ph B\ph S'}\nabla^{S'}_S\nabla_{CR'}f,
\end{aligned}
\]
and substituting the Riemannian curvatures by the analogue of the Weyl tensor according to equations  \eqref{U_def},   we  get 
\begin{equation}
\label{13}
\widetilde\square_{AB}S^{C'}_{C}=
U^{R'C'R}_{[AB]C}\nabla_{RR'}+U^{S'C'R'}_{[AB] S'}\nabla_{CR'}
+2\Pe^{(R'C')}_{[AB]}\nabla_{CR'}
-T^{R'C'S}_{A\ph B\ph S'}\nabla^{S'}_S\nabla_{CR'}.
\end{equation}
The curvature terms can be rewritten purely in terms of torsion by applying formulas in \ref{B} in appendix. In particular, 
it is visible that  the operator defined by the formula in the bottom slot is invariant if and only if the torsion vanishes. 

A way how to correct the construction in the presence of torsion is to correct the connection which defines the action of $\widetilde\square_{AB}$ on tractors. Such a correction is a one form with values in the endomorphisms of our tractor bundle $\ce_{[\al\bt]}$.
Exploring this space, we immediately find a candidate of a correction - the curvature of the standard tractor connection embedded invariantly into our space, i.e. an object defined by
\[
\Om_{a\al\gamma}^{\phantom{a\al\gamma}\bt\dt}:=
2X_{[\al|}^B X^{[\bt|}_{B'}\Om_{aB|\gamma]}^{\phantom{a}B'\phantom{\gamma}|\delta]}.
\]
In the vector notation, it acts on sections of $\ce_{[\al\bt]}$ by  \eqref{standard_curvature} as follows
\begin{equation}
\label{Om}
\Om^{A'}_A\sharp \begin{pmatrix}\sig\\ \mu^{C'}_C \\ \rho_{CD}\end{pmatrix}
=\begin{pmatrix}0 \\
T^{A'C'R}_{A\ph C\ph R'}\mu^{R'}_{R}+2U^{A'R'\ph C'}_{AC\ph R'}\sig\\ 
U_{A[CD]}^{A'R'R}\mu_{RR'}-2Q_{A[CD]R'}^{A'R'}\sig
\end{pmatrix}.
\end{equation}
Subtracting this one form from the normal tractor connection \eqref{normal_connection}, we get a new connection which behaves like the connction in the torsion free case. Indeed, such a correction
cancels second order terms in $\widetilde\square_{AB}S^{C'}_{C}$. But this is not sufficient. It turns out that terms which are of order one in the initial function and quadratic in the torsion remain. The crucial observation now is that the term in the middle slot can be written as a sum of two invariant terms and thus the whole tractor can be written as a sum of two tractors. Namely, by \eqref{dim=2} and \eqref{B1} we have
\[
U^{A'R'\ph C'}_{A\ph CR'}=-U^{A'C' R'}_{A\ph C\ph R'}+U^{A'R'C'}_{A\ph C\ph R'}=
2\Pe^{(A'C')}_{[AC]}-U^{R'A'C'}_{C\ph A\ph R'},
\]
and by equations \eqref{B3} and \eqref{B4} the second summand is given by a quadrat of the torsion and thus is invariant. Computing the corresponding splitting, we get another (finer) correction 
\begin{equation}
\label{C}
C^{A'}_A\sharp  \begin{pmatrix}\sig\\ \mu^{C'}_C \\ \rho_{CD}\end{pmatrix}=
\begin{pmatrix}
0
\\
U^{R'A'C'}_{C\ph A\ph R'}\sig
\\
\nabla_{R'[C}U^{S'(A'R')}_{D]A\ph S'}\sig-\nabla_{R'[C}U^{S'[A'R']}_{D]A\ph S'}\sig
\end{pmatrix}.
\end{equation}
It is easy to see that the connection $\widetilde{\nabla}^{A'}_A$ from equation \eqref{connection} can then be equivalently written in terms of the normal tractor connection $\nabla^{A'}_A$ and in terms of corrections \eqref{Om} and \eqref{C} as 
\[\widetilde{\nabla}^{A'}_A=\nabla^{A'}_A-\Om^{A'}_A\sharp -4C^{A'}_A\sharp.\]
We will show in the lemma below that with this connection, all terms in $\widetilde\square_{AB}S^{C'}_{C}$ cancel upon the projection to the target bundle $\ce^{A'}\yiIIi\ce_{BCD}[-1]$.

\begin{lem}
\label{istotallyskew}
If the connection defining $\widetilde\square_{AB}$ is coupled to the  modified tractor connection $\widetilde{\nabla}^{A'}_A$, then the construction described in the previous section  gives
\[
\widetilde\square_{AB}S^{C'}_{C}f \in \ce^{C'}_{[ABC]}[-1].
\]
\end{lem}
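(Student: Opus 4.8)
The strategy is to compute $\widetilde\square_{AB}S^{C'}_{C}f$ explicitly using the modified connection $\widetilde\nabla^{A'}_A = \nabla^{A'}_A - \Om^{A'}_A\sharp - 4C^{A'}_A\sharp$ and to show that the result, which a priori lives in $\ce^{C'}_{\bA C}[-1] = \ce^{C'}\otimes\ce_{[AB]}\otimes\ce_C[-1]$, actually lies in the totally skew part $\ce^{C'}_{[ABC]}[-1]$; that is, the ``extra'' component in the Young-diagram complement $\ce^{C'}\otimes\yiIIi\ce_{BCD}[-1]$ vanishes. First I would assemble $\widetilde\square_{AB}S^{C'}_{C}$ as the sum of three contributions: the normal-connection part, which by \eqref{13} equals $U^{R'C'R}_{[AB]C}\nabla_{RR'}+U^{S'C'R'}_{[AB]S'}\nabla_{CR'}+2\Pe^{(R'C')}_{[AB]}\nabla_{CR'}-T^{R'C'S}_{A\ph B\ph S'}\nabla^{S'}_S\nabla_{CR'}$; the contribution of $-\Om^{A'}_A\sharp$ coupled through $\square_{AB}=\nabla_{R'[A}\nabla^{R'}_{B]}$, computed from \eqref{Om} applied to the splitting $S$; and the contribution of $-4C^{A'}_A\sharp$, computed from \eqref{C} similarly.

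\textbf{Key steps.} The second step is to verify that the $\Om$-correction kills the second-order term $-T^{R'C'S}_{A\ph B\ph S'}\nabla^{S'}_S\nabla_{CR'}f$: since $\Om^{A'}_A\sharp$ applied to the top slot $\sigma$ of $S_{\gamma\dt}f$ produces a middle-slot term proportional to $T^{A'C'R}_{A\ph C\ph R'}\nabla_{RR'}$-type contributions, coupling it with one further $\nabla$ through the outer $\square_{AB}$ exactly reproduces (with sign) the offending second-order operator, and the Rho-type terms inside $\Om$ then also match $2U^{A'R'\ph C'}_{AC\ph R'}\sigma$. The third step is the delicate one: after the second-order cancellation, what remains is a first-order operator whose coefficients are quadratic in the torsion $T$ (and its covariant derivative), obtained by combining the leftover Weyl-tensor terms $U^{R'C'R}_{[AB]C}$, $U^{S'C'R'}_{[AB]S'}$, $2\Pe^{(R'C')}_{[AB]}$ with the $C$-correction terms $U^{R'A'C'}_{C\ph A\ph R'}$ and $\nabla_{R'[C}U^{S'(A'R')}_{D]A\ph S'}\mp\nabla_{R'[C}U^{S'[A'R']}_{D]A\ph S'}$. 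Here I would invoke the Bianchi-identity consequences collected in Lemma \ref{B} of the appendix --- in particular \eqref{B1}, \eqref{B3}, \eqref{B4} --- to rewrite every Weyl-tensor term that is \emph{not} of the form appearing in the harmonic curvature $\rho^{\phantom{AB}D}_{ABC}$ purely as quadratic expressions in $T$, and likewise re-express $Q_{abc}$ via $U$ and $T$. The point of the finer correction $C$ is precisely that $\widetilde\nabla$ was engineered so that these torsion-quadratic pieces organize into an operator whose non-totally-skew part (its image under the projection to $\ce^{C'}\otimes\yiIIi\ce_{BCD}[-1]$) is identically zero; one then checks this by contracting the remaining expression against the Young projector onto $\yiIIi$ and using the symmetries \eqref{torsion} of $T$ together with the identity $\nabla_{R'B}\nabla^{R'}_C = \nabla_{R'[B}\nabla^{R'}_{C]}$.

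\textbf{Main obstacle.} The hard part will be the bookkeeping in the third step: verifying that the torsion-quadratic first-order operator really has trivial $\yiIIi$-component. This requires the full set of Bianchi relations from Lemma \ref{B} to reduce the various $U$- and $Q$-terms to a common canonical form, and then a careful Young-symmetrizer computation over the three lower unprimed indices $A,B,C$ to see the cancellation; the primed indices $C'$ (a two-dimensional index, so $\ep^{A'B'}$-contractions are available via \eqref{dim=2}) add further simplifications but also more terms to track. As a sanity check, one may use the linearized-transformation calculus of Remark \ref{rem} to confirm $\dt(\widetilde\square_{AB}S^{C'}_{C}f)=0$ independently, which constrains the answer enough that the surviving terms must lie in $\ce^{C'}_{[ABC]}[-1]$; combined with the symbol argument (the leading symbol of the whole operator is $\tfrac12\Delta_{\bA}\Delta_{\bB}$, already totally skew by \eqref{dim=2}), this pins down the claim.
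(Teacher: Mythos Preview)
Your overall strategy coincides with the paper's: decompose $\widetilde\square_{AB}$ into the normal-connection piece plus the $\Om$- and $C$-corrections, compute the middle slot on the splitting, and then show the $(BC)$-symmetrization vanishes using Bianchi identities. Two points of detail need correcting, though. First, the top slot of $S_{\gamma\dt}f$ in \eqref{split} is $\sigma=0$, so the $2U^{A'R'C'}_{ACR'}\sigma$ and $Q\sigma$ terms in \eqref{Om}, and all the $\sigma$-terms in \eqref{C}, contribute nothing; the relevant $\Om\sharp$-contribution to the middle slot comes from $\mu^{C'}_C=\nabla^{C'}_Cf$, namely $T^{A'C'R}_{ACR'}\nabla^{R'}_Rf$. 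Consequently your proposed mechanism ``the Rho-type terms inside $\Om$ match $2U^{A'R'C'}_{ACR'}\sigma$'' does not occur; those $\Pe^{(R'C')}_{[AB]}$ and $U$-trace terms survive the $\Om$-correction and are killed only at the very end. Second, the trace identities \eqref{B1}, \eqref{B3}, \eqref{B4} you emphasize are auxiliary; the cancellation after symmetrizing in $(BC)$ hinges on the \emph{tracefree} identities \eqref{B5} and \eqref{B6} (together with \eqref{B2}), which express $U^{[A'B']E}_{A[BC]}$ and $U^{(A'B')E}_{A(BC)}$ in terms of $T$, $\nabla T$, and traces of $U$. With those two adjustments your outline becomes the paper's proof.
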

\begin{proof}
We prove an equivalent statement  $\widetilde\square_{A(B}S^{C'}_{C)}f=0$.
Symbolically
$$
\widetilde\square=\widetilde\square^{norm}-\Om\sharp\nabla-\nabla\Om\sharp+\Om\sharp \Om\sharp-4(C\sharp\nabla+\nabla C\sharp-4C\sharp C\sharp-\Om\sharp C- C\sharp\Om),
$$
where $\widetilde\square^{norm}$ is the operator from the previous section, obtained from the normal tractor connection \eqref{normal_connection}. Evidently, the actions of $\nabla C\sharp, C\sharp C\sharp, \Om\sharp C$ and $ C\sharp\Om$ on the splitting \eqref{split} vanish while
for the other terms using \eqref{Om} and \eqref{C} we compute
\[
\begin{aligned}
\Om_{R'A}\sharp\nabla^{R'}_B S^{C'}_C &=
-T^{R'C'S}_{A\ph C\ph S'}\nabla_{BR'}\nabla^{S'}_S +2\Pe^{(R'C')}_{[AC]}\nabla_{BR'}-U^{S'R'C'}_{C\ph A\ph S'}\nabla_{BR'}
\\
\nabla_{R'A}\Om^{R'}_B\sharp S^{C'}_C &=
\nabla_{R'A}T^{R'C'S}_{B\ph C\ph S'}\nabla^{S'}_S -2U^{C'R'R}_{B[AC]}\nabla_{RR'}
\\
\Om_{R'A}\sharp \Om^{R'}_B\sharp S^{C'}_C &=
T_{A\ph C\ph U'}^{R'C'U}T^{U'\phantom{RB}S}_{UR'BS'}\nabla^{S'}_S 
\\
C_{R'A}\sharp\nabla^{R'}_B S^{C'}_C &= \tfrac12 U^{S'R'C'}_{C\ph A\ph S'}\nabla_{BR'}
\end{aligned}
\]
Summarizing the results above, we get
\[
\begin{aligned}
\widetilde\square_{AB}S^{C'}_{C}=
\widetilde\square^{norm}_{AB}S^{C'}_{C}+
2T^{R'C'S}_{[A|CS'}\nabla_{|B]R'}\nabla^{S'}_S 
-(\nabla_{R'[A}T^{R'C'S}_{B]CS'})\nabla^{S'}_{S}
\\
+T_{[A| C\ph U'}^{R'C'U}T^{U'\phantom{RB}S}_{UR'|B]S'}\nabla^{S'}_S 
-\Pe^{(R'C')}_{[AC]}\nabla_{BR'}+\Pe^{(R'C')}_{[BC]}\nabla_{AR'}
\\
+U^{C'R'R}_{B[AC]}\nabla_{RR'}-U^{C'R'R}_{A[BC]}\nabla_{RR'}
-U^{S'R'C'}_{C[A|S'}\nabla_{|B]R'}
\end{aligned}
\]
Now we apply the formula   \eqref{obstruction}  for $\square^{norm}_{AB}S^{C'}_{C}f$, we symmetrize in $B$ and $C$, and we use the symmetry \eqref{torsion} of the torsion. Then we get
\[
\begin{aligned}
\widetilde\square_{A(B}S^{C'}_{C)}=&U^{(R'C')R}_{A(BC)}\nabla_{RR'}+U^{(S'C')R'}_{A(B|S'|}\nabla_{C)R'}
+\Pe^{(R'C')}_{[AB]}\nabla_{CR'}+\Pe^{(R'C')}_{[AC]}\nabla_{BR'}
\\&
-T^{R'C'S}_{A(B| S'}\nabla^{S'}_{S|}\nabla_{C)R'}
+T^{R'C'S}_{A(B|S'}\nabla_{|C)R'}\nabla^{S'}_S 
-\tfrac12(\nabla_{R'(B}T^{R'C'S}_{C)AS'})\nabla^{S'}_{S}
\\&
+\tfrac12T_{A(B|U'}^{R'C'U}T^{U'\phantom{RB}S}_{UR'|C)S'}\nabla^{S'}_S 
-\tfrac12\Pe^{(R'C')}_{[AC]}\nabla_{BR'}-\tfrac12\Pe^{(R'C')}_{[AB]}\nabla_{CR'}
\\&
+\tfrac12U^{C'R'R}_{B[AC]}\nabla_{RR'}+\tfrac12U^{C'R'R}_{C[AB]}\nabla_{RR'}
\\&
-\tfrac12U^{S'R'C'}_{(B|AS'|}\nabla_{C)R'}+\tfrac12U^{S'R'C'}_{(BC)S'}\nabla_{AR'}
\end{aligned}
\]
The second order terms are identical up to a commutation of the derivatives, and thus they sum up to a first order term quadratic in the torsion. We add this term to the other term quadratic in the torsion. Next, we collect and rewrite terms involving the Rho tensor,  terms involving the curvature $U_{ab}{}^{C'}_{D'}$ and terms involving $U_{ab}{}^C_D$. For rewriting terms involving (trace of) $U_{ab}{}^{C'}_{D'}$, we use  consequence \eqref{B2} of the Bianchi identity given below. Otherwise, we use a manipulation with indices and algebraic operations only. We get an equivalent equation
\[
\begin{aligned}
2\cdot\widetilde\square_{A(B}S^{C'}_{C)}=&
T_{A(B|U'}^{S'C'U}T^{U'R'R}_{U|C)S'}\nabla_{R'R}
-(\nabla^{S'}_{(B}T^{R'C'R}_{C)AS'})\nabla_{R'R}
\\& 
+\Pe^{(C'R')}_{[AB]}\nabla_{CR'}f+\Pe^{(C'R')}_{[AC]}\nabla_{BR'}
\\&
+U^{S'[C'R']}_{A(B|S'}\nabla_{|C)R'}f-U^{S'(C'R')}_{A(B|S'}\nabla_{|C)R'}
+U^{S'(C'R')}_{BCS'}\nabla_{AR'}
\\&
+\left(U^{(C'R')R}_{A(BC)}+U^{[C'R']R}_{B[AC]}
+U^{[C'R']R}_{C[AB]}\right)\nabla_{RR'}.
\end{aligned}
\]
To prove that this is zero, we write curvature terms in the last line of the previous equation in terms of  torsion.  Namely, by  \eqref{B5} we have
\[
U^{[C'R']R}_{B\ph [A\ph C]}+U^{[C'R']R}_{C\ph [A\ph B]}=
-T^{S'[C'|F}_{A\ph (B|\ph F'}T^{F'|R']R}_{F\ph |C)\ph S'}
- U^{S'[C'R']}_{A(B\ph |S'|}\dt^R_{C)},
\] 
and by  \eqref{B6} and \eqref{B1} we have
\[
\begin{aligned}
U^{(C'R')R}_{A\ph (B\ph C)}=&
\nabla^{S'}_{(B}T^{R'C'R}_{C)A\ph S'}
-T^{S'(C'|F}_{A\ph (B\ph |F'}T^{F'|R')R}_{F\ph |C\ph )S'}
-\Pe^{(C'R')}_{[AB]}\dt^R_C-\Pe^{(C'R')}_{[AC]}\dt^R_B
\\ &
+U^{S'(C'R')}_{A(B|S'|}\dt^R_{C)}
-U^{S'(C'R')}_{B\ph C\ph S'}\dt^R_{A},
\end{aligned}
\]
The substitution then gives $\widetilde\square_{A(B}S^{C'}_{C)}f=0$ and this proves the lemma.
\end{proof}
Having proven this technically difficult lemma it is now easy to finish the proof of theorem \ref{thm} in the presence of a nonzero torsion. Namely, the lemma shows that $\widetilde\square_{\bA}S^{A'}_{B}$ has values in  bundle $\ce^{A'}\otimes\ce_{[\bA B]}[-1]$ only. On the other hand, the target bundle of the operator $\square_{\bA\bB}$ does not appear in the decomposition of the tensor product of bundles $\ce^{A'}\otimes\ce_{[\bA B]}[-1]$ and $\ce^{C'}_C$. Hence  $\square_{\bA\bB}$ transforms exclusively by terms in  $\ce^{A'}\otimes\yiIIi\ce_{\bA B}[-1]$ nad thus is invariant. A straightforward application of formulas \eqref{Laplace} and \eqref{connection} then gives the formula \eqref{formula}.

Let us remark at the end that the invariance of \eqref{formula} can be also checked by a direct but lengthy computation of its transformation under a change of the Weyl connection by a repetetive application of formulas summarized in Remark \ref{rem}.

\section{Bianchi identity} 
Bianchi identity gives the relation between the torsion and all components of the curvature except its harmonic part.
In terms of abstract (tensor) indices it reads as
\[
R_{[abc]}^{\phantom{[ab}e}=
-\nabla_{[a}T_{bc]}^{\phantom{bc}e}+T_{[ab}^{\phantom{ab}f}T_{|f|d]}^{\phantom{|f|d}e}.
\]
The precise relations for individual components of the curvature are obtained by taking suitable algebraic operations and by taking into accont the symmetries of the torsion and the curvature. For the sake of simplicity we express  the components of the curvature tensor $U_{abc}^{\phantom{ab}e}$, defined by \eqref{U_def},  rather  then the components of the Riemannian curvature.
\begin{lem}
\label{B}
Components of the curvature tensors $U^{A'B'C}_{A\ph B\ph D}$ and $U^{A'B'C'}_{A\ph B\ph D'}$ are related to the torsion as follows. Trace terms:
\begin{align}
\label{B1}
U_{abE}^{\phantom{ab}E}&=-U_{abE'}^{\phantom{ab}E'}=
-\tfrac{1}{q+2}\nabla_{e}T_{ab}^{\phantom{ab}e}=2\Pe_{[ab]}.
\\
\label{B2}
U^{(A'B')E}_{E\ph [A\ph B]}&=U^{E'(A'B')}_{[A\ph B]\ph E'}=
U^{[A'B']E}_{E\ph (A\ph B)}=U^{E'[A'B']}_{(A\ph B)\ph E'}=0.
\\
U^{(A'B')E}_{E\ph (A\ph B)}&=U^{E'(A'B')}_{(A\ph B)\ph E'}=-\tfrac{1}{q}T^{E'(A'|F}_{E\ph (A|\ph F'}T^{F'|B')E}_{F\ph |B)\ph E'}, \label{B3}
\\
U^{[A'B']E}_{E\ph [A\ph B]}&=U^{E'[A'B']}_{[A\ph B]\ph E'}=\tfrac{1}{q+4}T^{E'[A'|F}_{E\ph [A|\ph F'}T^{F'|B']E}_{F\ph |B]\ph E'}, \label{B4}
\end{align}
Tracefree terms:
\begin{align}
\label{B5}
U^{[A'B']E}_{A\ph [B\ph C]}&=
\tfrac13(T^{E'[A'|F}_{A\ph [B|\ph F'}T^{F'|B']E}_{F\ph |C]\ph E'}
-T^{E'[A'|F}_{B\ph C\ph F'}T^{F'|B']E}_{F\ph A\ph E'}
\\ &
+U^{E'[A'B']}_{A[B\ph |E'|}\dt^E_{C]}
-U^{E'[A'B']}_{B\ph C\ph E'}\dt^E_{A}),\nonumber
\\
\label{B6}
U^{(A'B')E}_{A\ph (B\ph C)}&=
\nabla^{E'}_{(B}T^{A'B'E}_{C)A\ph E'}
-T^{E'(A'|F}_{A\ph (B\ph |F'}T^{F'|B')E}_{F\ph |C\ph )E'}
+U^{A'B'E'}_{A(B|E'|}\dt^E_{C)}
\\ &
+U^{E'(A'B')}_{A(B|E'|}\dt^E_{C)}
-U^{E'(A'B')}_{B\ph C\ph E'}\dt^E_{A},\nonumber
\\
U^{A'B'E}_{[A\ph B\ph C]}&=\tfrac14(-\nabla^{E'}_{[A}T^{A'B'E}_{B\ph C] E'}
+2T^{E'(A'|F}_{[A\ph B\ph |F'}T^{F'|B')E}_{F\ph |C]\ph E'}
+U^{A'B'E'}_{[AB|E'|}\dt^E_{C]}), \label{B7}
\\
U_{\ph A \ph B\ph  E'}^{(A'B'C')}&=\tfrac{1}{q-2}(\nabla^{(A'}_{E}T^{B'C')E}_{A\ph B\ph E'}
-2T^{(A'B'|F}_{E\ph [A\ph |F'}T^{F'|C')E}_{F\ph|B\ph]E'}
+U^{(A'B'|I'|}_{\ph A\ph B\ph I'}\dt^{C')}_{E'}), \label{B8}
\\
U_{(A B) E'}^{A'B'C'}&=\tfrac{2}{1-q}(T^{C'[A'|F}_{E\ph(A\ph |F'}T^{F'|B']E}_{F\ph |B)E'}
-U^{C'[A'|E|}_{E\ph (A\ph B)}\dt^{B']}_{E'}), \label{B9}
\end{align}
\end{lem}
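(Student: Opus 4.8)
The plan is to derive \eqref{B1}--\eqref{B9} by decomposing the algebraic first Bianchi identity
\[
R_{[abc]}{}^e=-\nabla_{[a}T_{bc]}{}^e+T_{[ab}{}^fT_{|f|c]}{}^e
\]
--- viewed, after splitting each tangent index $a$ into a pair $(A,A')$, as an identity in $\bigwedge^3(\ce^a)^*\otimes\ce^a$ --- into its irreducible components for the structure group $S(GL(2,\R)\times GL(n,\R))$, and reading off one formula from each component. In practice a component is singled out by choosing the $SL(2,\R)$--symmetry type of the primed indices (severely restricted by \eqref{dim=2}) together with a Young symmetry type of the unprimed indices $A,B,C$ --- totally symmetric, hook, or totally skew --- and possibly a trace. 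Into each projection I would substitute the defining relation \eqref{U_def} so as to trade the Riemannian curvature for the curvature $U$ (which is what produces the $\Pe$--terms), use the symmetry \eqref{torsion} of the torsion, which annihilates the right-hand side in several of these projections, and repeatedly invoke \eqref{dim=2} to kill complete alternations of three primed indices and to reduce $\ep$--contractions of primed indices to traces. Normality of the geometry is used through the co-closedness relation \eqref{coclosedness}.

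I would first settle the trace relations \eqref{B1}--\eqref{B4}. The equality $U_{abE}{}^E=-U_{abE'}{}^{E'}$ is immediate from $U$ being $\g_0$--valued, and, since the Weyl connection is taken to be exact, \eqref{U_def} identifies this common trace with $2\Pe_{[ab]}$; tracing the Bianchi identity over an (anti)symmetrized pair --- the quadratic torsion term vanishing in this projection by \eqref{torsion} --- then equates it with a multiple of the divergence $\nabla_eT_{ab}{}^e$, which is \eqref{B1}. The vanishings \eqref{B2} are the projections in which the torsion annihilates the right-hand side of the Bianchi identity, combined with the co-closedness \eqref{coclosedness}, which ties the two types of trace together; \eqref{B3} and \eqref{B4} are the two remaining trace pieces, in which only the quadratic torsion term survives. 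The numerical coefficients $\tfrac1{q+2},\tfrac1q,\tfrac1{q+4}$ are the inverses of the eigenvalue of the relevant trace operator and come out of the index combinatorics.

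Next I would treat the trace-free identities \eqref{B5}--\eqref{B9}. Here one fixes the primed symmetry type, applies to the Bianchi identity the Young symmetrizer in $A,B,C$ matching the left-hand side of the identity to be proved, substitutes \eqref{U_def}, and simplifies using \eqref{dim=2}. For \eqref{B5}, \eqref{B6} and \eqref{B7} this is essentially a rearrangement of the projected identity; the component totally symmetric in $A,B,C$ never shows up, precisely because that component of $U$ is the harmonic curvature, on which the Bianchi identity imposes no condition (cf.\ \eqref{harmonic_curv}). The delicate cases are \eqref{B8} and \eqref{B9}: the component of $U_{ab}{}^{C'}_{D'}$ occurring there is not trace-free, so one must split off its trace part and solve for it, and the denominators $q-2$ in \eqref{B8} and $1-q$ in \eqref{B9} are exactly the inverses of the eigenvalue of the trace operator on those symmetry types --- which is why these identities are stated with such denominators and degenerate in the critical dimension.

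The \emph{main obstacle} is organisational rather than conceptual: for each of \eqref{B1}--\eqref{B9} one must pin down correctly which irreducible summand of $\bigwedge^3(\ce^a)^*\otimes\ce^a$ it sits in, apply the matching projector to the Bianchi identity without dropping any of the several (anti)symmetrizations or $\ep$--contractions, and, in \eqref{B8} and \eqref{B9}, separate trace from trace-free part cleanly enough to read off the scalar factors. As an independent check on the whole list, every resulting formula must be invariant under a change of Weyl connection, which can be verified directly from the transformation rules collected in Remark \ref{rem}.
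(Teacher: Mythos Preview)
Your proposal is correct and follows essentially the same strategy as the paper: project the first Bianchi identity onto irreducible $S(GL(2,\R)\times GL(n,\R))$--pieces, replace $R$ by $U$ via \eqref{U_def} (the $\Pe$--correction dropping out of the cyclic sum since $\partial\Pe_{[abc]}{}^e=0$), and then read off \eqref{B1}--\eqref{B9} using the torsion symmetries \eqref{torsion}, the co-closedness \eqref{coclosedness}, and $\dim\ce^{A'}=2$. The paper's organisation differs only in that it first takes the two contractions $\dt^{E'}_{C'}$ and $\dt^{C}_{E}$ of the expanded Bianchi identity to obtain two master equations, and then extracts \eqref{B1}--\eqref{B7} from the first and \eqref{B8}--\eqref{B9} from the second by the same (anti)symmetrizations you describe; this is the same computation packaged slightly differently.
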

\begin{proof}
First observe that  $\partial\Pe_{[abc]}^{\phantom{abc}e}=0$ and thus the Riemannian curvature tensor  on the left hand side of the Bianchi identity can be equivalently replaced by the curvature tensor $U$. Then expanding the alternations, the Bianchi identity in detail has the form as follows
\[
\begin{array}{l}
U_{A\ph B\ph C}^{A'B'E}\dt^{C'}_{E'}-U_{A\ph B\ph E'}^{A'B'C'}\dt^{E}_{C}
+U_{C\ph A\ph B}^{C'A'E}\dt^{B'}_{E'}
-U_{C\ph A\ph E'}^{C'A'B'}\dt^{E}_{B}
+U_{B\ph C\ph A}^{B'C'E}\dt^{A'}_{E'}
\\
-U_{B\ph C\ph E'}^{B'C'A'}\dt^{E}_{A}
=-\nabla^{A'}_{A}T^{B'C'E}_{B\ph C\ph E'}-\nabla^{C'}_{C}T^{A'B'E}_{A\ph B\ph E'}
-\nabla^{B'}_{B}T^{C'A'E}_{C\ph A\ph E'}
\\
+T^{A'B'F}_{A\ph B\ph F'}T^{F'C'E}_{F\ph C\ph E'}
+T^{C'A'F}_{C\ph A\ph F'}T^{F'B'E}_{F\ph B\ph E'}
+T^{B'C'F}_{B\ph C\ph F'}T^{F'A'E}_{F\ph A\ph E'}.
\end{array}
\]
Taking  the contractions $\dt^{E'}_{C'}$ and $\dt^C_E$ we get 
\begin{equation}
\label{Bianchi_1}
\begin{array}{l}
2U^{A'B'E}_{A\ph B\ph C}-U^{A'B'E'}_{A\ph B\ph E'}\dt^E_C
+U_{C\ph A\ph B}^{B'A'E}
-U^{E'A'B'}_{C\ph A\ph E'}\dt^E_B
+U^{B'A'E}_{B\ph C\ph A}-U^{B'E'A'}_{B\ph C\ph E'}\dt^E_A
\\
=
-\nabla^{E'}_C T^{A'B'E}_{A\ph B\ph E'}
+T^{E'A'F}_{C\ph A\ph F'}T^{F'B'E}_{F\ph B\ph E'}
+T^{B'E'F}_{B\ph C\ph F'}T^{F'A'E}_{F\ph A\ph E'},
\end{array}
\end{equation}
respectively 
\begin{equation}
\label{Bianchi_2}
\begin{array}{l}
U_{A\ph B\ph E}^{A'B'E}\dt^{C'}_{E'}-qU_{A\ph B\ph E'}^{A'B'C'}
+U_{E\ph A\ph B}^{C'A'E}\dt^{B'}_{E'}
-U_{B\ph A\ph E'}^{C'A'B'}
+U_{B\ph E\ph A}^{B'C'E}\dt^{A'}_{E'}
-U_{B\ph A\ph E'}^{B'C'A'}
\\
=-\nabla^{C'}_{E}T^{A'B'E}_{A\ph B\ph E'}
+T^{C'A'F}_{E\ph A\ph F'}T^{F'B'E}_{F\ph B\ph E'}
+T^{B'C'F}_{B\ph E\ph F'}T^{F'A'E}_{F\ph A\ph E'}.
\end{array}
\end{equation}
Equation \eqref{B1} is obtained by  taking both contractions simultaneously and by applying the definition \eqref{U_def} and the coclosedness condition \eqref{coclosedness} of the curvature tensor $U$  and  the trace-freeness and the symmetries \eqref{torsion} of the torsion $T$. The other trace-parts \eqref{B2}, \eqref{B3} and \eqref{B4}  are obtained by taking suitable algebraic projections in the contraction $\dt^B_E$ of  \eqref{Bianchi_1} which by \eqref{B1} simplifies to
\[
2U^{A'B'E}_{A\ph E\ph C}
-qU^{E'A'B'}_{C\ph A\ph E'}+U^{B'A'E}_{E\ph C\ph A}
-U^{B'E'A'}_{A\ph C\ph E'}=T^{B'E'F}_{E\ph C\ph F'}T^{F'A'E}_{F\ph A\ph E'}.
\]
Equations \eqref{B5}, \eqref{B6} and \eqref{B7} then follows from \eqref{Bianchi_1} by taking alternations $[A'B']$ and $[BC]$, symmetrizations $(A'B')$ and $(BC)$ and  alternation  $[ABC]$ respectively, and by applying the formulas for the traces of $U$. Similarly, equations \eqref{B8} and \eqref{B9} follow from \eqref{Bianchi_2} by taking symmetrizations $(A'B'C')$ and  $(AB)$ respectively. 
\end{proof}

\end{document}